\documentclass[12pt]{article}
\usepackage[dvips]{graphicx}
\usepackage{amssymb}
\usepackage{amsmath}

\setlength{\oddsidemargin}{3mm}
\setlength{\textwidth}{16cm}
\setlength{\topmargin}{0mm}
\setlength{\textheight}{23.5cm}
\setlength{\headsep}{0in}
\setlength{\headheight}{0pt}

\newtheorem{theorem}{Theorem}
\newtheorem{proposition}{Proposition}

\newtheorem{lemma}{Lemma}
\newtheorem{example}{Example}

\newenvironment{proof}
{\begin{rm}\par\smallskip\noindent{\bf Proof.}\quad}{\QED\end{rm}}
\def\BBox{\rule{2mm}{3mm}}
\def\QED{\hfill$\BBox$}

\def\BBox{\rule{2mm}{3mm}}
\def\QED{\hfill$\BBox$}

\title{Efficient Computation of a Canonical Form for a Generalized P-matrix}

\author{Walter D. Morris, Jr.
}

\date{}

\begin{document}

\maketitle
\renewcommand{\thefootnote}{\fnsymbol{footnote}}
\footnote[0]{{\it Key words\/}. Markov Decision Problem, Polytope, Linear Programming}

\begin{abstract}
We use recent results on algorithms for Markov decision problems to show that a canonical form
for a generalized P-matrix can be computed, in some important cases, by a strongly polynomial algorithm.
\end{abstract}
\section{Introduction}Suppose that the $m \times n$ real matrix $A$ has the block form $$ A = \left[ {\begin{array}{*{20}c}
 A_1& | & A_2 & | \cdots & | & A_m  \end{array} } \right], $$ with each column of block $A_j$ being of the form $e_j - \gamma p_{jk}$, with $e_j$ the $j^{th}$ standard basis vector and $p_{jk}$ a vector of nonnegative entries that sum to one. A {\it Markov decision problem}, as described in \cite{Ye}, is a linear program of the form:
maximize $v^Tb$, subject to $v^TA\le c^T$ where $b\in \mathbb{R}^m$ is positive and $c \in \mathbb{R}^n$.  Two recent papers, \cite{Ye} and subsequently \cite{HMZ}, advanced the theory of such problems significantly.  Ye proved that the simplex method, with Dantzig's rule for the entering variable, is a strongly polynomial algorithm to solve the Markov decision problem if the discount factor $\gamma$ is fixed. Such a positive result for complexity of a pivoting algorithm is rare. The paper of Hansen, Miltersen and Zwick showed a similar result for a considerable generalization of the Markov decision problem, in which one seeks a solution to the optimality conditions of the problem, with some inequalities reversed.

It is natural to seek applications of the advances achieved in these recent papers.  The sequence of papers by Kitihara and Mizuno, of which \cite{KM} is one example, generalizes Ye's results to other linear programs.

The matrix $A$ from a Markov decision problem has the {\it P-property}, which means that all square submatrices of $A$ formed by taking one column from each block have determinants of the same non-zero sign.  We consider the problem of
satisfying the optimality conditions of the Markov decision problem for more general $A$ with the P-property.  This problem is known as the generalized LCP (see \cite{CD}, \cite{GMR}).  We introduce this problem in Section 2.  Theorem 1 of that section describes a canonical form for $A$ with the P-property, introduced in \cite{WJ} to describe the set of ``n-step vectors" of an LCP.  One of our goals is to emphasize the significance of this form.  Theorem 2 describes a subclass of matrices with the P-property for which the analyses of \cite{Ye} can be directly applied.  This description is in terms of the canonical form of Theorem 1.  Theorems 1 and 2 are not essentially new, but the proofs supplied are more streamlined than earlier proofs.

Section 3 discusses the Markov decision problem.  The complexity results of \cite{Ye} and \cite{HMZ} have the term
$1-\gamma$ in the denominator.  This leads us to look for equivalent Markov decision problems for which this term is as large as possible.  A linear program to find such an optimal problem is formulated.  We call this linear program $LP(A)$.

Section 4 presents a two step algorithm to solve $LP(A)$.  This method finds the canonical form of Theorem 1 along the way.  Section 5 shows that the two step algorithm to solve $LP(A)$ is a strongly polynomial time algorithm, if the optimal value of $LP(A)$ is a fixed positive number.

\section{The P-property}
Suppose that the matrix $A \in \mathbb{R}^{m \times n}$ has its column set indexed by the
set of pairs $$\{(j,k): j=1,2,\ldots,m, k = 1,2,\ldots,n_j\},$$ where $n_1+n_2+\cdots+n_m = n$.
A {\it representative} submatrix of $A$ is an $m \times m$ submatrix for which, for each $j = 1,2,\ldots,m$ the $j^{th}$ column is indexed by $(j,k)$ for some $k$.

The matrix $A$ has the {\it P-property} if the determinants of all of its representative submatrices have the same nonzero sign.  (See Example 1 on Page 5 of this article.)
 Cottle and Dantzig \cite{CD}, Theorem 3, proved the existence part of the following. (Uniqueness was proved by Szanc \cite{S}.)
 \begin{proposition}  Suppose that $A$ has the P-property.  If $c \in \mathbb{R}^n$ then there exists a unique vector $v \in \mathbb{R}^m $ so that $c^T- v^T A \ge 0$, and
 for each $j=1,2,\ldots,m$, $(c^T-v^T A)_{jk} = 0$ for at least one $k \in \{1,2,\ldots,n_j\}$.
\end{proposition}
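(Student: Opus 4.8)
The conclusion is equivalent to a zero‑finding statement for a piecewise‑linear map. Writing $a_{jk}$ for the column of $A$ indexed by $(j,k)$, define the concave piecewise‑linear functions $F_j(v)=\min_{1\le k\le n_j}\bigl(c_{jk}-a_{jk}^{T}v\bigr)$ and set $F=(F_1,\dots,F_m)\colon\mathbb{R}^m\to\mathbb{R}^m$. A vector $v$ meets the two requirements of the proposition exactly when, for every $j$, $F_j(v)\ge 0$ (this is the inequality $c^T-v^TA\ge 0$ restricted to block $j$) and the minimum defining $F_j(v)$ equals $0$ (some coordinate of block $j$ is tight); that is, exactly when $F(v)=0$. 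So the plan is to show that $F$ is a homeomorphism of $\mathbb{R}^m$; existence and uniqueness of $v$ are then immediate from $v=F^{-1}(0)$.

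The algebraic fact I would extract from the P‑property is: \emph{if $B$ and $B'$ are representative submatrices of $A$, then $B^{-1}B'$ is a P‑matrix.} For $I\subseteq\{1,\dots,m\}$ let $\widehat{B}$ be the representative submatrix whose $j$‑th column is that of $B'$ when $j\in I$ and that of $B$ otherwise; then $B^{-1}\widehat{B}$ has $j$‑th column equal to the $j$‑th column of $B^{-1}B'$ for $j\in I$ and equal to $e_j$ for $j\notin I$, so Laplace expansion along the standard‑basis columns gives $\det\bigl((B^{-1}B')_{II}\bigr)=\det\widehat{B}/\det B$, which is positive because $\det\widehat{B}$ and $\det B$ are determinants of representative submatrices and hence share a common sign. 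Using this, $F$ is injective: if $F(v)=F(v')$ and $u=v-v'$, then picking for each $j$ an index attaining $F_j$ at $v$ gives a column $a_{j\sigma_j}$ with $a_{j\sigma_j}^{T}u\ge 0$, and one attaining $F_j$ at $v'$ gives $a_{j\tau_j}$ with $a_{j\tau_j}^{T}u\le 0$; assembling these into representative submatrices $B$ and $B'$ yields $B^{T}u\ge 0$ and $B'^{T}u\le 0$. If $u\ne 0$ then $w:=B^{T}u$ is nonzero with $w\ge 0$ and $(B^{-1}B')^{T}w=B'^{T}u\le 0$, contradicting the sign‑nonreversal property of the P‑matrix $(B^{-1}B')^{T}$ (which is a P‑matrix, like $B^{-1}B'$, since transposition preserves principal minors): no nonzero $w$ satisfies $w_i\bigl((B^{-1}B')^{T}w\bigr)_i\le 0$ for all $i$. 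Hence $u=0$.

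It remains to get surjectivity, for which I would first check that $F$ is proper. If not, there are $v^{(k)}$ with $\|v^{(k)}\|\to\infty$ and $F(v^{(k)})$ bounded; passing to a subsequence, $v^{(k)}/\|v^{(k)}\|\to\bar u$ with $\|\bar u\|=1$. The lower bound on $F(v^{(k)})$ forces $a_{jk}^{T}\bar u\le 0$ for every column of $A$, while the upper bound, after refining the subsequence so that the minimizing index in each block is constant, produces one column $a_{jl_j}$ per block with $a_{jl_j}^{T}\bar u\ge 0$, hence $a_{jl_j}^{T}\bar u=0$; the representative submatrix with columns $a_{jl_j}$ then annihilates the nonzero vector $\bar u$, contradicting its nonsingularity. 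Thus $F$ is proper. A continuous injective map $\mathbb{R}^m\to\mathbb{R}^m$ is open by invariance of domain, so $F(\mathbb{R}^m)$ is open; a proper map into a locally compact Hausdorff space is closed, so $F(\mathbb{R}^m)$ is also closed; being nonempty and $\mathbb{R}^m$ connected, $F(\mathbb{R}^m)=\mathbb{R}^m$, so $F$ is a homeomorphism, which finishes the proof.

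The step I expect to be the real obstacle is surjectivity: the argument above invokes invariance of domain, a comparatively heavy tool for a linear‑algebraic statement. A more self‑contained route is to note that $F$ is a piecewise‑linear map whose derivative on each cell of linearity is $-B^{T}$ for a representative submatrix $B$, so all of its cell‑wise Jacobians have the constant sign $(-1)^m\,\mathrm{sign}(\det B)$; coherently oriented piecewise‑linear maps of $\mathbb{R}^m$ are global homeomorphisms, but establishing that from first principles is essentially the whole difficulty. (Alternatively, one may simply cite \cite{CD} for existence and \cite{S} for uniqueness, as indicated in the statement.)
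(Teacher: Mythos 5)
Your argument is correct, but it is a genuinely different route from the paper, which in fact supplies no proof at all: it attributes existence to Theorem 3 of \cite{CD} and uniqueness to \cite{S}, and the original existence proof of Cottle and Dantzig is constructive, via a principal-pivoting scheme, which fits the algorithmic theme of the rest of the paper. Your proof is a self-contained analytic/topological one in the Gale--Nikaido style: you encode the conditions as $F(v)=0$ for the blockwise-min map $F$, prove the key algebraic lemma that $B^{-1}B'$ is a P-matrix for any two representative submatrices (your Laplace-expansion argument for this is correct, and note it gives the same conclusion for $(B^{-1}B')^T$), get injectivity from sign non-reversal, properness from the normalization/compactness argument, and surjectivity from invariance of domain plus closedness of proper maps. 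All the steps check out; the only external inputs are the classical sign-nonreversal characterization of P-matrices and invariance of domain, both standard, though the latter is a heavier tool than anything the paper relies on (the paper's Z-matrix machinery in Proposition 2 is of a different, purely linear-algebraic character). What your approach buys is a short, complete, citation-light proof of existence and uniqueness simultaneously; what it gives up is constructiveness --- it yields no algorithm or complexity information, which is precisely the aspect the paper cares about when it remarks that the complexity of finding the vector of Proposition 1 is unknown for general $A$ with the P-property.
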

 Let $\hat{C}$ be the representative matrix of $A$ for
which column $j$ is column $(j,n_j)$ of $A$, for $j = 1,2,\ldots,m$. Form the matrix $\hat{A}$ from the columns $(j,k)$ of $\hat{C}^{-1}A$ for which $k < n_j$. Let $c_N$ be the subvector of $c$ with subscripts $(j,k)$ only for $k < n_j$. Then the vector $z = \hat{C}^Tv$ satisfies
$z \ge 0$ and $w= c_N - z^T \hat A \ge 0$, $z_j\prod_{k=1}^{n_j-1}w_{jk} =0$ for $j = 1,2,\ldots,m$.  Thus $z$ is a solution to the generalized linear complementarity problem defined by $\hat{A}^T$ and $c$.  If $n_j = 2$ for all $j$, then this generalized linear complementarity problem is a standard linear complementarity problem.

 A reformulation of the P-property that follows easily from \cite{CD}, Theorem 5, is that for every nonzero vector $x$ in the row space of $A$, there
 is a $j \in \{1,2,\ldots,m\}$ so that the coordinates $x_{jk}$ have the same nonzero sign for $k = 1,2,\ldots, n_j$.  We call this the sign-preserving property of the vector $x$.  Finally, Theorem 6 of \cite{CD} says that if $A$ has the P-property, then the row space of $A$ contains a positive vector.

 A polytope ${\cal P}_{A,b}$ defined by the system $Ax=b$, $x \ge 0$ is said to be combinatorially equivalent to a product of simplices if the solution to $Cx=b$ is positive for every representative submatrix $C$ of $A$.
In that case, we say that the representative submatrices of $A$ are the nondegenerate basic feasible solutions to $Ax=b$, $x \ge 0$.  If $n_j=2$ for $j=1,2,\ldots,m$, then such a polytope ${\cal P}_{A,b}$ is combinatorially equivalent to a cube.
If ${\cal P}_{A,b}$ is combinatorially equivalent to a product of simplices, then $A$ has the P-property.

 A square matrix with nonnegative off-diagonal entries is called a {\it Z-matrix.}  The following classical
 theorem \cite{FP} is central for Z-matrices.
 \begin{proposition} Let $M$ be a Z-matrix.  The following are equivalent.
 \begin{enumerate}
 \item $M$ is a P-matrix, i.e. all principal minors of $M$ are positive.
 \item The system $x>0, Mx>0$ has a solution.
 \item The inverse of $M$ exists and is nonnegative.
 \end{enumerate}
 \end{proposition}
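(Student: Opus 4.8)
\noindent The plan is to prove the cycle of implications $(1)\Rightarrow(3)\Rightarrow(2)\Rightarrow(1)$. The single structural fact driving everything is that, since $M$ is a $Z$-matrix, for any scalar $s$ exceeding all diagonal entries of $M$ the matrix $B=sI-M$ is entrywise nonnegative, so that $M=s(I-s^{-1}B)$ with $B\ge 0$. This is the only place the $Z$-matrix hypothesis enters, and it is what lets the Perron--Frobenius theorem and the Collatz--Wielandt bound $\rho(B)\le\max_i (Bx)_i/x_i$ (valid for $x>0$) be brought to bear.

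For $(3)\Rightarrow(2)$ I would take a strictly positive vector, say the all-ones vector $e$, and put $x=M^{-1}e$; then $Mx=e>0$, and $x=M^{-1}e\ge 0$ because $M^{-1}\ge 0$, while strict positivity of $x$ follows since a zero row of $M^{-1}$ would force $M^{-1}$ to be singular. For $(2)\Rightarrow(1)$, suppose $x>0$ and $Mx>0$. Writing $M=sI-B$ as above, $Mx>0$ says $Bx<sx$ coordinatewise, so Collatz--Wielandt gives $\rho(s^{-1}B)<1$; hence $I-s^{-1}B$ is invertible with inverse $\sum_{k\ge 0}(s^{-1}B)^{k}\ge 0$, so $M$ is invertible with $M^{-1}\ge 0$ (this already yields $(2)\Rightarrow(3)$ as a by-product), and every eigenvalue of $M$ has the form $s-\mu$ with $|\mu|<s$, hence positive real part, so $\det M>0$. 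Finally this computation applies to every principal submatrix: if $J\neq\varnothing$ and $x_J$ denotes the restriction of $x$ to $J$, then for $i\in J$ the nonpositivity of the off-diagonal entries gives $(M_{JJ}x_J)_i=(Mx)_i-\sum_{j\notin J}m_{ij}x_j\ge(Mx)_i>0$, so $M_{JJ}$ again satisfies the hypothesis of $(2)$ and therefore has $\det M_{JJ}>0$. As $J$ ranges over the nonempty index sets, this is precisely $(1)$.

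It remains to prove $(1)\Rightarrow(3)$, which I would do by induction on $n$; the case $n=1$ is immediate since then $m_{11}>0$. For $n\ge 2$, partition $M$ into a $1\times 1$ leading block $m_{11}$, a first-row remainder $a^{T}$, a first-column remainder $b$, and a trailing $(n-1)\times(n-1)$ block $M_{1}$. Then $M_{1}$ is a $Z$-matrix whose principal minors are principal minors of $M$, hence a P-matrix, so by the induction hypothesis $M_{1}^{-1}$ exists and $M_{1}^{-1}\ge 0$; also $a,b\le 0$. The Schur complement $\sigma=m_{11}-a^{T}M_{1}^{-1}b$ satisfies $\det M=\sigma\det M_{1}$, so $\sigma>0$. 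Writing out the block inverse of $M$ in terms of $M_{1}^{-1}$ and $\sigma$, its leading block is $\sigma^{-1}>0$; its off-blocks are $-\sigma^{-1}a^{T}M_{1}^{-1}$ and $-\sigma^{-1}M_{1}^{-1}b$, which are nonnegative since $a,b\le 0$ and $M_{1}^{-1}\ge 0$; and its trailing block is $M_{1}^{-1}+\sigma^{-1}(M_{1}^{-1}b)(a^{T}M_{1}^{-1})$, which is nonnegative since $M_{1}^{-1}\ge 0$, $\sigma>0$, and $(M_{1}^{-1}b)(a^{T}M_{1}^{-1})$ is the outer product of two nonpositive vectors. Hence $M^{-1}\ge 0$, closing the cycle. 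I expect the one genuinely fiddly point to be this last sign check on the trailing block, where three nonnegativity conditions must be combined at once; everywhere else the signs are essentially forced once the decomposition $M=sI-B$ is in hand.
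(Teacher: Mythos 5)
Your proof is correct, and there is nothing in the paper to compare it against: the paper states this proposition as the classical Fiedler--Pt\'ak theorem and simply cites \cite{FP}, giving no proof of its own, so your self-contained argument is a genuine addition rather than a variant of the paper's route. The cycle $(1)\Rightarrow(3)\Rightarrow(2)\Rightarrow(1)$ is complete (and, as you note, $(2)\Rightarrow(3)$ drops out of the Neumann-series step, so all implications among the three statements are covered); the Collatz--Wielandt bound applied to $B=sI-M$, the observation that every principal submatrix $M_{JJ}$ inherits hypothesis (2) from the sign of the discarded terms, and the Schur-complement induction for $(1)\Rightarrow(3)$ with its block-inverse sign check are all sound. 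One point you should state explicitly: your argument uses the standard convention that a $Z$-matrix has \emph{nonpositive} off-diagonal entries --- this is exactly what makes $B=sI-M\ge 0$ and $a,b\le 0$ --- whereas the paper's text literally defines a $Z$-matrix by ``nonnegative off-diagonal entries.'' That is evidently a typo in the paper: with nonnegative off-diagonals the proposition is false, e.g.\ $M=\left(\begin{smallmatrix}2&1\\1&2\end{smallmatrix}\right)$ satisfies (1) and (2) but its inverse has negative entries, and the rest of the paper (the complementary Z-form with $(\overline{X}A)_{ijk}\le 0$ off the diagonal blocks, the notion of K-matrix) clearly uses the standard sign convention, so your reading is the intended one.
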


A Z-matrix $M$ satisfying the conditions of Proposition 2 is called a K-matrix.

\begin{theorem} Suppose $A$ has the P-property.  Then there exists an $m \times m$ matrix $\overline{X}$ so that
\begin{enumerate}
\item $(\overline{X}A)_{ijk}>0$ whenever $i=j$.
\item $(\overline{X}A)_{ijk}<=0$ whenever $i \neq j$.
\item For every pair $(i,j)$ with $i \neq j$, there exists $k \in \{1,2,\ldots,n_j\}$ so that $(\overline{X}A)_{ijk}=0$.
\end{enumerate}

$\overline{X}$ is unique up to positive scaling of its rows.
\end{theorem}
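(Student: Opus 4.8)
The plan is to build $\overline{X}$ one row at a time. To produce row $i$, I would first ``contract'' block $i$ of $A$ to a single column and then apply Proposition~1 to the contracted matrix; the uniqueness assertion will then be read off from the uniqueness clause of Proposition~1.

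Fix $i\in\{1,\dots,m\}$. Choose positive scalars $\lambda_1,\dots,\lambda_{n_i}$ (say all equal to $1$), write $a_{(i,k)}$ for column $(i,k)$ of $A$, put $a=\sum_{k=1}^{n_i}\lambda_k a_{(i,k)}$, and let $A'$ be the $m\times(n-n_i+1)$ matrix obtained from $A$ by deleting the columns of block $i$ and inserting $a$ in their place, so that in $A'$ block $i$ has the single column $a$ while every block $j\neq i$ is unchanged. Expanding a determinant by multilinearity in the column $a$ shows that each representative submatrix of $A'$ has determinant $\sum_k\lambda_k\det C_k$, where the $C_k$ are representative submatrices of $A$; by the P-property of $A$ these determinants share a common nonzero sign, and since $\lambda_k>0$ the sum has that same sign, so $A'$ has the P-property. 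Now apply Proposition~1 to $A'$ and to the cost vector $c'$ that is $0$ on every block $j\neq i$ and equal to $1$ on the one-element block $i$, obtaining the unique $v$ with $c'^T-v^TA'\ge 0$ and at least one zero coordinate in each block. I would take $v^T$ to be the $i$-th row of $\overline{X}$, and repeat for every $i$.

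Properties~1--3 then need to be verified for $v$. For $j\neq i$ the block-$j$ coordinates of $c'^T-v^TA'$ are $-(v^TA)_{jk}$, so $(v^TA)_{jk}\le 0$ and one of them vanishes, which is~2 and~3. The lone coordinate of block $i$ of $c'^T-v^TA'$, namely $1-v^Ta$, is $\ge 0$ and, being the only coordinate of its block, must equal $0$; hence $\sum_k\lambda_k(v^TA)_{ik}=v^Ta=1>0$, so $v^TA\neq 0$ and, by the sign-preserving property, $v^TA$ is single-signed and nonzero on some block $j_0$. Parts~2 and~3 just established put a zero coordinate in every block $j\neq i$, which rules out $j_0\neq i$; thus $j_0=i$, and then $\sum_k\lambda_k(v^TA)_{ik}=1>0$ with all $\lambda_k>0$ forces $(v^TA)_{ik}>0$ for every $k$, which is~1. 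For uniqueness, let $x_1^T,\dots,x_m^T$ be the rows of any matrix satisfying 1--3 and fix $i$; by~1 we have $x_i^Ta=\sum_k\lambda_k(x_i^TA)_{ik}>0$, and a short check using 1--3 together with this normalization shows that $x_i/(x_i^Ta)$ satisfies exactly the conditions that characterize $v$ in Proposition~1 applied to $(A',c')$. Hence $x_i/(x_i^Ta)=v$, so $x_i$ is a positive multiple of $v$; since $v$ does not depend on the chosen matrix, the $i$-th rows of any two matrices satisfying 1--3 are positive multiples of one another.

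The two points requiring care are the fact that the contracted matrix $A'$ retains the P-property (handled by the multilinearity-of-the-determinant computation above) and, more importantly, the passage from the single inequality $\sum_k\lambda_k(v^TA)_{ik}>0$ that Proposition~1 delivers to the strict positivity of each separate coordinate of block $i$ of $\overline{X}A$; this last upgrade is exactly where the sign-preserving property is used, in combination with the fact that conditions~2--3 have already forced a zero coordinate into every other block.
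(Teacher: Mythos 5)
Your proof is correct, but it takes a genuinely different route from the paper's. The paper produces row $m$ of $\overline{X}$ by passing to the complementary basis: it forms $\hat{A}=\hat{C}^{-1}A$ (where $\hat{C}$ collects the columns $(j,n_j)$), deletes row $m$ and block $m$ to get an $(m-1)$-row matrix $\hat{A}^m$ with the P-property, applies Proposition~1 to $\hat{A}^m$ with cost vector the negated restriction of row $m$ of $\hat{A}$, and then appends a $1$ and invokes the sign-preserving property to get positivity on block $m$; row $m$ of $\overline{X}$ is $(v,1)^T\hat{C}^{-1}$. You instead keep all $m$ rows but contract block $i$ to a single column $a=\sum_k\lambda_k a_{(i,k)}$, verify the P-property of the contracted matrix directly by multilinearity of the determinant, and apply Proposition~1 with the cost vector that is the indicator of the contracted column; the forced complementarity on the singleton block gives $v^Ta=1$, and the sign-preserving property then upgrades this to strict positivity on every column of block $i$, exactly as the paper uses it. What the paper's reduction buys is a smaller subproblem ($m-1$ rows and $n-n_m$ columns, versus your $m$ rows and $n-n_i+1$ columns), which the author explicitly flags as an improvement over the original proof in \cite{WJ} and which feeds into the algorithmic treatment via $\hat{A}^m$ in Sections 4 and 5. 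What your argument buys is self-containedness and completeness: you prove (rather than assert) that the auxiliary matrix retains the P-property, you avoid the change of basis $\hat{C}^{-1}$, and, importantly, you give an explicit uniqueness argument --- normalizing an arbitrary row $x_i$ by $x_i^Ta>0$ and invoking Szanc's uniqueness in Proposition~1 --- whereas the paper's proof establishes only existence and leaves the uniqueness clause of Theorem~1 implicit.
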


\begin{proof} We recall the matrix $\hat A$ = $\hat{C}^{-1}A$, where $\hat{C}$ is the representative matrix of $A$ for
which column $j$ is column $(j,n_j)$ of $A$, for $j = 1,2,\ldots,m$.
Let $\hat A^m$ be the matrix obtained from $\hat A$ by deleting row $m$ and deleting columns $(m,1),(m,2),\ldots,(m,n_m)$.  Also, define
$c$ to be $-1$ times the vector obtained from row $m$ of $\hat A$ by deleting entries $(m,1),(m,2),\ldots,(m,n_m)$.  Then $\hat A^m$ has the P-property, so there is a vector $v \in \mathbb{R}^{m-1}$ satisfying $c^T-v^T \hat A^m \ge 0$, and for every $j \neq n$ there is a $k \in \{1,2,\ldots,n_j\}$ so that $(c^T-v^T \hat A^m)_{jk}=0.$  Appending a $1$ to the end of vector $v$, we get $(v,1)^T \hat{A}$ which has the negative of the sign pattern of $c^T-v^T \hat A^m$ on coordinates
not indexed by $(m,1),(m,2),\ldots,(m,n_m)$, and which is 1 on component $(m,n_m)$. The sign-preserving property of the row space of $A$ then implies that $(v,1)^T \hat{A}$ is positive on all components indexed by  $(m,1),(m,2),\ldots,(m,n_m)$.  Thus $(v,1)^T \hat{C}^{-1}$ can serve as row $m$ of the matrix $\overline{X}$.  A similar construction can be carried out for each of the rows of $\overline{X}$.
\end{proof}

Theorem 1 was proved in \cite{WJ} for the case $|n_j|=2$ for all $j$.  The algorithm in the Proof to construct $\overline{X}$ uses Proposition 1 applied to matrices with $m-1$ rows. This improves slightly upon the original proof
which applied Proposition 1 to matrices with $m$ rows.  Unfortunately, the complexity of finding the vector guaranteed by Proposition 1 is unknown for general $A$ with the P-property.
We will call the matrix $\overline{A} = \overline{X}A$ given by the previous theorem the {\it complementary Z-form} of $A$.

\begin{theorem}
Let $A$ have the P-property.  The following are equivalent:
\begin{enumerate}
\item There exists a positive vector $p\in\mathbb{R}^m$ so that $p^T\overline{A} >0$.
\item There exists a matrix $X \in \mathbb{R}^{m\times m}$ and a positive vector $p \in \mathbb{R}^m$ so that
$XA$ satisfies conditions (1) and (2) of Theorem 1, and $p^TXA>0$.
\item There exists a vector $b \in \mathbb{R}^m$ so that  for every representative submatrix $C$ of $A$ the solution to $Cx=b$ is positive.
\end{enumerate}
\end{theorem}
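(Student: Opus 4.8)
The plan is to establish the cycle $(1)\Rightarrow(2)\Rightarrow(3)\Rightarrow(1)$. The first implication is immediate: take $X=\overline{X}$, so that $XA=\overline{A}$ satisfies conditions (1) and (2) of Theorem 1 and the vector $p$ of (1) is exactly what (2) demands. For $(2)\Rightarrow(3)$ I would invoke Proposition 2. By conditions (1) and (2) of Theorem 1 every representative submatrix $D$ of $XA$ is a Z-matrix, and $p^T(XA)>0$ gives $p^TD>0$, that is $D^Tp>0$ with $D^T$ again a Z-matrix; Proposition 2 then forces $D^T$, hence $D$, to be a K-matrix. In particular each such $D$ is invertible, so $X$ is invertible, and $D^{-1}$ is a nonnegative invertible matrix. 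Putting $b:=X^{-1}\mathbf 1$, every representative submatrix $C$ of $A$ satisfies $C^{-1}b=(XC)^{-1}\mathbf 1=D^{-1}\mathbf 1>0$, which is (3).

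The implication $(3)\Rightarrow(1)$ carries the real content, and I would prove it in two stages. The first stage shows that (3) forces $\overline{X}$ to be invertible and $\overline{X}b$ to be strictly positive. Fix a row index $i$. By condition (3) of Theorem 1, for each $j\neq i$ there is a column $(j,k_j)$ of block $j$ on which the $i$-th row of $\overline{A}$ vanishes; form the representative submatrix $C^{(i)}$ of $A$ out of these columns together with any column of block $i$. Then every entry of the $i$-th row of $\overline{X}C^{(i)}$ except the $i$-th is zero, and the $i$-th is positive (condition (1) of Theorem 1), so $(\overline{X}b)_i$ equals that positive entry times the $i$-th coordinate of $(C^{(i)})^{-1}b$, which is positive by (3). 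With $\overline{X}b>0$ in hand, each representative submatrix $D=\overline{X}C$ of $\overline{A}$ is a Z-matrix with $D(C^{-1}b)=\overline{X}b>0$ and $C^{-1}b>0$, so Proposition 2 makes every such $D$ a K-matrix, and in particular $\overline{X}$ invertible.

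For the second stage, suppose (1) failed; by Gordan's theorem of the alternative this provides a vector $\mu\ge 0$, $\mu\neq 0$, with $\overline{A}\mu\le 0$. Restricting $\overline{A}$ to the rows and blocks on which $\mu$ is nonzero produces a matrix of the same kind --- all of whose representative submatrices are still K-matrices, being principal submatrices of representative submatrices of $\overline{A}$ --- so we may assume every block component $\mu^{(j)}$ is nonzero. Set $\eta_j:=\mathbf 1^T\mu^{(j)}>0$ and $c_j:=\overline{A}^{(j)}\mu^{(j)}/\eta_j$, a convex combination of the columns of block $j$; then $\hat D:=[\,c_1\mid\cdots\mid c_m\,]$ is a Z-matrix (positive diagonal, nonpositive off-diagonal entries, inherited columnwise from $\overline{A}$) with $\hat D\eta=\overline{A}\mu\le 0$. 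Expanding any principal minor of $\hat D$ multilinearly in its columns writes it as a convex combination of the corresponding principal minors of representative submatrices of $\overline{A}$, each positive because those submatrices are K-matrices; so every principal minor of $\hat D$ is positive, and Proposition 2 makes the Z-matrix $\hat D$ a K-matrix with $\hat D^{-1}\ge 0$. But then $\eta=\hat D^{-1}(\hat D\eta)\le 0$, contradicting $\eta>0$. Hence no such $\mu$ exists and (1) holds.

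I expect this second stage to be the main obstacle: passing from the discrete fact that every representative submatrix of $\overline{A}$ is a K-matrix to the single positive vector $p$ of (1). The device that makes it work is to replace the finite family of representative submatrices by the convex hulls of the block-columns of $\overline{A}$ and to note, by multilinearity, that this preserves positivity of every principal minor --- which is precisely the point at which the full K-matrix conclusion of the first stage is used rather than merely the weaker P-property of $\overline{A}$ (which by Theorem 6 of \cite{CD} guarantees only some sign-compatible vector in the row space).
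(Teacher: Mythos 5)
Your proof is correct, and for most of the cycle it coincides with the paper's: (1)$\Rightarrow$(2) is immediate, your (2)$\Rightarrow$(3) is the paper's argument (every representative submatrix of $XA$ is a K-matrix, then pull the positive right-hand side back through the inverse of $X$ -- your writing $b=X^{-1}\mathbf{1}$ also silently corrects the paper's $\overline{X}^{-1}q$, which should be $X^{-1}q$), and the first stage of your (3)$\Rightarrow$(1) -- using the zero pattern from condition (3) of Theorem 1 to isolate $(\overline{X}b)_i$, conclude $\overline{X}b>0$, and hence that every representative submatrix of $\overline{A}$ is a K-matrix via Proposition 2 -- is exactly the paper's argument, phrased if anything a bit more carefully, since you never need to presuppose that $\overline{X}$ is invertible. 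The two proofs part ways only in the final extraction of a positive $p$ with $p^T\overline{A}>0$. The paper adjoins the unit column $e_j$ to each block $j$, notes that the augmented matrix has the P-property because the determinants of its representative submatrices are principal minors of K-matrices, and then invokes Theorem 6 of \cite{CD} (the row space of a matrix with the P-property contains a positive vector) as a black box. You instead argue by contradiction through Gordan's theorem: a nonzero $\mu\ge 0$ with $\overline{A}\mu\le 0$ is condensed, block by block, into the square Z-matrix $\hat D$ of convex combinations of block columns, whose principal minors are positive by multilinearity -- the same determinantal fact the paper uses for its augmented matrix -- so $\hat D^{-1}\ge 0$ forces $\eta\le 0$, contradicting $\eta>0$. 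What your route buys is a self-contained argument that avoids citing \cite{CD} Theorem 6 (in effect you reprove the special case of it that is needed here), at the price of the alternative-theorem step and the preliminary restriction to the blocks where $\mu$ is nonzero; the paper's route is shorter given the citation and makes the slack-column construction explicit. Both versions rest on the same stage-one facts, and I see no gap in yours.
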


\begin{proof}  Clearly, (1) implies (2). If (2) holds, then every representative submatrix of $XA$ is a K-matrix.  Let $q$ be any positive vector.  Then for any representative submatrix $C$ of $XA$, the solution to $Cx=q$ is positive.  It follows that for every representative submatrix $D=\overline{X}^{-1}C$ of $A$, the solution to $Dx=b$ is positive for $b=\overline{X}^{-1}q$.  Thus (3) holds.
To show that (3) implies (1), suppose that the nondegenerate feasible bases of $Ax=b$, $x\ge 0$ are the representative submatrices of $A$.  Let $C$ be a representative submatrix of $\overline{X}A$ for which $C_{mj}= 0$, $j = 1,2,\ldots,m-1.$  The solution to $Cx=\overline{X}b$ must be positive, and $C_{mm}$ is positive, so component $m$ of $\overline{X}b$ is positive.  In a similar way, we see that all components of $\overline{X}b$ are positive.  It follows that every representative matrix of $\overline{A}$ is a K-matrix.  We add a column $(j,n_j+1)$, equal to the $j^{th}$ standard basis vector of $\mathbb{R}^m$, to $\overline{A}$ for each $j=1,2,\ldots,m$.  The resulting matrix retains the P-property, by property (1) of Proposition 2 and so it has a positive vector in its row space.  There is therefore a positive vector $p$ so that $p^T\overline{A} > 0$.
\end{proof}

The equivalence of (2) and (3) in Theorem 2 was already discussed in \cite{GMR}.  The equivalence of (1) to the other
conditions was discussed for the case $|n_j|=2 $ for all $j$ in \cite{WJ} and $\cite{CPS}$, section 4.8.  The proof above seems to be particularly straightforward.
The matrix $\overline{X}$ is useful for several reasons.  It provides a quick certificate for proving that the equivalent
conditions of Theorem 2 do {\it not} hold.  One needs only to display $\overline{A}$ and a representative submatrix $C$ of $\overline{A}$ for which the system $Cx\le0, x\ge 0, x\neq 0$ has a solution.  A different reason
arises if one wants to describe the set of vectors $b$ for which the representative submatrices of $A$ are the nondegenerate feasible bases of $Ax=b, x \ge 0$.  This set is $\{\overline{X}^{-1}q: q>0\}$, a simplicial cone.
In the case where $A=(I,M)$ where column $j$ of $I$ is indexed by $(j,1)$ and column $j$ of a square matrix $M$ is indexed by $(j,2)$ for all $j$, a vector $b$ for which the (\cite{CPS} section 4.8) representative submatrices of $A$ are the nondegenerate feasible bases of $Ax=b, x\ge 0$ is a so-called n-step vector for the matrix $M$ and can be used to solve any linear complementarity problem with matrix $M$ in at most $m$ pivots.  The location of the zeroes in $\overline{A}$ may offer useful combinatorial information.  If $|n_j|=2$ for all $j$, Theorem 2 holds, and one of the representative submatrices of $A$ is a diagonal matrix, then the paper \cite{FFGL} shows that the simplex method finds the vector guaranteed by Proposition 1 very quickly.

It follows that the efficient computation of $\overline{X}$ is of interest.

\section{Discounted Markov Decision Problems}

A discounted Markov decision problem is defined by an $m \times n$ matrix $A$, with the columns indexed by pairs $(j,k)$ as in the previous section, a vector $c \in \mathbb{R}^n$, and a scalar $\gamma$ between 0 and 1.  Every representative submatrix of $A$ is of the form $I-\gamma P$, with $I$ the $m \times m$ identity matrix and $P$ a nonnegative matrix with column sums equal to 1. The goal is to find a vector $v \in \mathbb{R}^m$ so that   $c^T-v^T A \ge 0$, and
 for each $j=1,2,\ldots,m$, $(c^T-v^T A)_{jk} = 0$ for at least one $k \in \{1,2,\ldots,n_j\}$.

It is easy to see that the matrix $A$ for a discounted Markov decision problem satisfies the P-property.  Moreover, it satisfies condition (2) of Theorem 2 with $X = I$ and $p = e^m$, the vector of length $m$ with all entries equal to one.

\begin{proposition} Suppose that the matrix $A$ has the P-property and satisfies the equivalent conditions of Theorem 2. Then the problem of finding a $v$ as in Proposition 1 is equivalent to solving a Markov decision problem.
\end{proposition}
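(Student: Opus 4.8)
The plan is to convert the Proposition~1 problem for $A$ into a genuine discounted Markov decision problem by left‑multiplying $A$ by an invertible matrix and right‑multiplying by a positive diagonal matrix. Since both operations alter the data of the problem only through an explicit linear bijection of the vectors $v$, they preserve the desired equivalence.

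First I would apply Theorem~2: because $A$ has the P‑property and satisfies the equivalent conditions there, condition~(1) supplies the complementary Z‑form $\overline{A}=\overline{X}A$, whose entries satisfy $\overline{A}_{jjk}>0$ and $\overline{A}_{ijk}\le 0$ for $i\neq j$, together with a positive vector $p$ with $u^{T}:=p^{T}\overline{A}>0$. The matrix $\overline{X}$ is invertible: a representative submatrix $\overline{C}$ of $\overline{A}$ is a $K$‑matrix, hence invertible, and $\overline{C}=\overline{X}C$ for the corresponding representative submatrix $C$ of $A$, which is invertible by the P‑property, so $\overline{X}=\overline{C}C^{-1}$.

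Next I would normalize. Let $D$ be the positive diagonal matrix with $D_{(j,k)(j,k)}=u_{jk}^{-1}$, and set $\tilde{A}:=\mathrm{diag}(p)\,\overline{A}\,D$. Scaling columns and rows by positive numbers preserves the sign pattern, so $\tilde{A}_{jjk}>0$ and $\tilde{A}_{ijk}\le 0$ for $i\ne j$; moreover the vector of column sums of $\tilde{A}$ is $\mathbf 1^{T}\mathrm{diag}(p)\,\overline{A}\,D=p^{T}\overline{A}\,D=u^{T}D=\mathbf 1^{T}$, so every column sum equals $1$. A column with sum $1$ and nonpositive off‑diagonal entries has its diagonal entry at least $1$; let $M:=\max_{j,k}\tilde{A}_{jjk}\ge 1$. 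If $M=1$ then every column of $\tilde{A}$ is a standard basis vector and the Proposition~1 problem is solved coordinatewise, so assume $M>1$ and put $\gamma:=1-1/M\in(0,1)$ and $B:=\tfrac1M\tilde{A}$. Then $B$ has all column sums equal to $1-\gamma$, diagonal entries in $[\,1-\gamma,\,1\,]$, and nonpositive off‑diagonal entries, so column $(j,k)$ of $B$ equals $e_{j}-\gamma p_{jk}$ with $p_{jk}:=\gamma^{-1}(e_{j}-B_{\cdot,(j,k)})\ge 0$ and $\mathbf 1^{T}p_{jk}=\gamma^{-1}\bigl(1-(1-\gamma)\bigr)=1$. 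Thus every representative submatrix of $B$ has the form $I-\gamma P$ with $P$ column‑stochastic, i.e.\ $B$ is the matrix of a discounted Markov decision problem with discount $\gamma$.

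Finally I would read off the equivalence. We have $B=X'AD$ with $X':=\tfrac1M\mathrm{diag}(p)\,\overline{X}$ invertible. For any $c$ put $c':=Dc$; then $(c')^{T}-w^{T}B=\bigl(c^{T}-(X'^{T}w)^{T}A\bigr)D$, so $w$ solves the Proposition~1 problem for $(B,c')$ if and only if $v:=X'^{T}w$ solves it for $(A,c)$, since right‑multiplication by the positive diagonal $D$ preserves signs and zero patterns within each block. Hence, up to this explicit linear change of variables, finding the $v$ of Proposition~1 for $A$ is the same as solving the discounted Markov decision problem with data $B$, $c'$, $\gamma$ and any positive objective vector $b$ (the complementary $w$ is optimal for such an LP because the active representative submatrix $I-\gamma P$ of $B$ has the nonnegative inverse $\sum_{i\ge 0}\gamma^{i}P^{i}$). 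I expect the main obstacle to be the normalization step: arranging that all column sums are equal, which the two‑sided scaling built from the positive row‑space vector $p$ achieves, while keeping the diagonal entries at most $1$, which the global rescaling by $1/M$ forces and which is exactly what manufactures the discount factor $\gamma$; the degenerate case $M=1$ must also be disposed of separately.
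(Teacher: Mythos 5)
Your proof is correct, but it reaches the Markov decision problem by a different normalization than the paper. The paper takes an $X$ with the sign pattern of Theorem 1, scales its rows so that $(e^m)^TXA>0$, and then \emph{augments} the matrix: it appends a zero column indexed $(m+1,1)$ and a new row whose $(j,k)$ entry is $d-\sum_i (XA)_{ijk}$ with $d=\min_{(j,k)}\sum_i(XA)_{ijk}$, so that all column sums become equal to $d$ while the original columns are untouched; it then checks that a vector solves the augmented system iff its last coordinate is zero and the rest solves the original one, so any pivoting method runs identically on both. You instead keep the size fixed and use a two-sided positive scaling: rows by $\mathrm{diag}(p)$ (using the positive row-space certificate of Theorem 2) and columns by the reciprocals of the resulting column sums, followed by a global factor $1/M$ to force the diagonal entries below $1$, which manufactures the discount $\gamma=1-1/M$; the solution correspondence is then an explicit linear bijection $w\mapsto X'^Tw$ (with $c\mapsto Dc$), and your verification that $B$'s columns are $e_j-\gamma p_{jk}$ with $p_{jk}$ stochastic, that $\overline{X}$ is invertible, and that the complementary $w$ is LP-optimal via the nonnegative inverse of $I-\gamma P$, is sound; the degenerate case $M=1$ is legitimately disposed of (it could also be folded in by scaling column sums to some value strictly less than $1$). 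What each buys: your reduction is dimension-preserving and makes the equivalence of solution sets completely explicit, whereas the paper's augmentation leaves the columns (and $c$, up to one appended zero) unchanged, so the effective $1-\gamma$ is exactly the minimum column sum $d$ — the quantity later optimized by $LP(A)$ and appearing in the complexity bounds — and the step-by-step identification of pivoting trajectories is what the later strong polynomiality argument relies on; your $\gamma=1-1/M$ is governed by the largest scaled diagonal entry instead and does not track that quantity as directly.
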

\begin{proof}
Suppose that $A$ satisfies the equivalent conditions of Theorem 2.  Then there exists $X \in \mathbb{R}^{m \times m}$ satisfying (2) of Theorem 2, and by scaling the rows of $X$ we can assure that $(e^m)^TXA > 0$.  It will not generally be true that the entries of $(e^m)^TXA$ will all be equal, but we can enforce this by adding a row and column in a way that will not affect the application of any pivoting algorithm applied to the matrix $XA$.  Let $d = min_{(j,k)}\sum_i(XA)_{ijk}$.  We can augment $XA$ to an $(m+1) \times (n+1)$ matrix $\widetilde{XA}$ by first adding an $(n+1)^{st}$ column, indexed by $(m+1,1)$, of zeroes, and then adding a row of length $n+1$ for which entry $(j,k)$ is $d-\sum_{i=1}^m(XA)_{ijk}$ for $j=1,2,\ldots,m+1$, $k = 1,2,\ldots, n_j.$

Every representative submatrix of the matrix $\widetilde{XA}$ will contain the new column $(m+1,1)$.  Suppose that
$c$ is a vector in $\mathbb{R}^n$, and let $\tilde{c}$ be $c$ with a 0 appended as an $(m+1,1)$ component.
Suppose that $C$ is a representative submatrix of $A$ with column indices in a set $J \subseteq [n]$ and let $\tilde{C}$ be the representative submatrix of $\widetilde{XA}$ with columns indexed by $J \cup \{(m+1,1)\}$. Let $c_J$ be the subvector of $c$ with components indexed by $J$, and let $\tilde{c_J}$ be the subvector of $\tilde{c}$ with components indexed by $J \cup \{(m+1,1)\}$.  It is easy to see that a vector $p \in \mathbb{R}^m$ satisfies $v^T\tilde{C}=\tilde{c_J}$ if and only if $v^T_{m+1}=0$ and the vector $v'$ obtained from $v$ by deleting the last component, satisfies
$(v')^TXC=c_J$.  Thus, any pivoting algorithm applied to solving the linear program: minimize $c^Tx$ subject to
$Ax=b, x \ge 0$, where the representative matrices of $A$ are the nondegenerate feasible bases of $Ax=b, x \ge 0$, will go through the same steps as one applied to solving the Markov decision problem given by $\widetilde{XA}$ and $\tilde{c}$.
\end{proof}

One example of a linear program that Proposition 2 shows is equivalent to a Markov decision problem is the Klee-Minty
cube, studied in \cite{KM}.   The transformation of the Klee-Minty cube into a Markov decision problem necessarily leads to a very small value of $1-\gamma$.
Two recent papers give polynomial time complexity results on pivoting algorithms for the discounted Markov decision problem.  The paper \cite{Ye} proves that Dantzig's rule of choosing the entering variable with the smallest reduced cost, applied to a discounted Markov decision problem with an $m \times n$ matrix and discount factor $\gamma$, takes at most $\frac{m(n-m)}{1-\gamma}\cdot log(\frac{m^2}{1-\gamma})$ pivots to find the solution.  The paper \cite{HMZ} proves that the strategy iteration method requires at most $O(\frac{n}{1-\gamma}log(\frac{m}{1-\gamma})$ iterations.  These bounds are comparable because an iteration of the strategy iteration method requires more work than an iteration of the simplex method.  Both of these bounds are polynomials in $n$ and $m$ if $\gamma$ is not assumed to be part of the input data.  One should note that the paper \cite{HMZ} solves a considerable generalization of the Markov decision problem
with the same complexity.  This generalization allows for a partition of the set $[m]$ into two parts $M_1$ and $M_2$, and the vector  $v$ must satisfy $(c^T-v^T A)_{jk} \ge 0$ for $j \in M_1$ and $(c^T-v^TA)_{jk} \le 0$ for $j \in M_2$, in addition to the complementarity condition which says that for each $j \in [m]$, $(c^T-v^T A)_{jk} = 0$ for some $k \in \{1,2,\ldots,n_j\}$.

Given a matrix $A$ with the P-property, it seems desirable to look for an equivalent Markov decision problem matrix  with $1-\gamma$ as large as possible.  This leads to the following linear program:$$  {\begin{array}{*{20}c}
 \text{maximize } d \text{ subject to}  \\
    (XA)_{ijk} \le 1 \text{ whenever } i=j \\
    (XA)_{ijk} \le 0 \text{ whenever } i \neq j \\
    (e^m)^TXA \ge d(e^n)^T
      \end{array} }.  $$

This linear program is similar to one from \cite{P}.

We will call this linear program $LP(A)$.  It is clear that $LP(A)$ always has the feasible solution $X = 0$, $d=0$, and that $LP(A)$ has a solution with positive optimal value if and only if one of the equivalent conditions of Theorem 2 is satisfied.

\begin{example}
Let
$$A = \left[ {\begin{array}{*{20}c}
 4 & 4& | & -1 & -3 & | & -2 & -1 \\
    -2 & -1 & |& 4 & 4 & | &-1& -1   \\
    -1 & -2 & |& -1 & 0 & | & 4& 4
      \end{array} } \right]. $$

      Using Maple 11 to solve $LP(A)$ yields the solution
      $$X = \left[ {\begin{array}{*{20}c}
 111/350 & 37/350 & 37/350 \\
 1/25 & 7/25& 2/25  \\
 4/35 & 3/35 & 2/7
      \end{array} } \right], d=33/70, $$ and
      $$XA = \left[ {\begin{array}{*{20}c}
 333/350 & 333/350& | & 0 & -37/70 & | & -111/350 & 0 \\
  -12/25 & -7/25 & |& 1 & 1 & | & -1/25 & 0 \\
  0 & -1/5 & |& -2/35 & 0 & | & 29/35 & 33/35
      \end{array} } \right]. $$

\end{example}

In this example, the columns appear in the order $(1,1),(1,2),(2,1),(2,2),(3,1),(3,2)$, and $n_1=n_2=n_3=2$.
Note that the matrix $A$ already satisfies the sign conditions (1) and (2) of Theorem 1, and it satisfies
condition (2) of Theorem 2.  The matrix $XA$ found by Maple 11 does not, however, satisfy condition (3) of
Theorem 1, because neither $(XA)_{211}$ nor $(XA)_{212}$ is 0.  This leads one to ask if there must be an
optimum solution of $LP(A)$, if the objective value of $LP(A)$ is positive, for which all three conditions
of Theorem 1 are satisfied.

\section{The two step method}
We will assume in this section that $A$ has the P-property and that $A$ satisfies the equivalent conditions of
Theorem 2.  We want to show that $LP(A)$ can be computed in strongly polynomial time if the optimal value of $LP(A)$ is a constant and not part of the input.  In addition, an optimal $X$ satisfying all three conditions of Theorem 1 will be found.  First we
recall the matrix $\hat{A}^m$ from the proof of Theorem 1.

\begin{lemma} The matrix $\hat{A}^m$ satisfies the equivalent conditions of Theorem 2.
\end{lemma}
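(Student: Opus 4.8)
The plan is to verify condition (3) of Theorem 2 for $\hat{A}^m$, since that formulation transfers most cleanly from $A$ to this submatrix. I first recall the relevant facts about $\hat{A}=\hat{C}^{-1}A$: the column $(j,n_j)$ of $\hat{A}$ equals the $j$-th standard basis vector $e_j$, so the representative submatrix of $\hat{A}$ consisting of the columns $(1,n_1),\ldots,(m,n_m)$ is the identity; and $\hat{A}$ inherits the P-property from $A$ because left multiplication by the invertible $\hat{C}^{-1}$ multiplies every representative determinant by the fixed scalar $\det(\hat{C})^{-1}$.

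Next I would set up the correspondence between representative submatrices of $\hat{A}^m$ and of $\hat{A}$. Given a representative submatrix $C'$ of $\hat{A}^m$ --- a choice of one column from each block $j=1,\ldots,m-1$ --- adjoin the column $(m,n_m)=e_m$ of $\hat{A}$ to obtain a representative submatrix $D$ of $\hat{A}$. Because $e_m$ has zeros in rows $1,\ldots,m-1$ and a $1$ in row $m$,
$$ D = \left[ \begin{array}{cc} C' & 0 \\ s^T & 1 \end{array} \right] $$
for some $s \in \mathbb{R}^{m-1}$, and expanding along the last column gives $\det D = \det C'$. In particular this recovers the fact (already used in the proof of Theorem 1) that $\hat{A}^m$ has the P-property, so it is legitimate to speak of ``the equivalent conditions of Theorem 2'' for $\hat{A}^m$.

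Then I would transport the vector from condition (3). Since $A$ satisfies the conditions of Theorem 2, there is $b \in \mathbb{R}^m$ whose unique solution against every representative submatrix of $A$ is positive; equivalently, with $\tilde{b} = \hat{C}^{-1}b$, the solution to $Dx=\tilde{b}$ is positive for every representative submatrix $D$ of $\hat{A}$ (taking $D=I$ shows in particular $\tilde{b}>0$). Applying this to the specific $D$ built above, and writing $\tilde{b}=(\tilde{b}_N,\tilde{b}_m)$ with $\tilde{b}_N \in \mathbb{R}^{m-1}$ and the solution as $(y,x_m)$, the top block of $Dx=\tilde{b}$ reads $C'y=\tilde{b}_N$, while positivity of the full solution forces $y>0$. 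As $\tilde{b}_N$ does not depend on the choice of $C'$, it is a single vector witnessing condition (3) of Theorem 2 for $\hat{A}^m$.

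The only step requiring any care is the bookkeeping that identifies column $(m,n_m)$ of $\hat{A}$ with $e_m$ and reads off the resulting block-triangular shape of $D$; this is precisely what lets the system $Dx=\tilde{b}$ decouple, so that its first $m-1$ equations become $C'y=\tilde{b}_N$. With that in hand the lemma is a direct translation of Theorem 2(3) from $A$ to $\hat{A}$ to $\hat{A}^m$.
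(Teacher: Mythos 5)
Your proof is correct, but it goes by a genuinely different route than the paper. The paper verifies condition (2) of Theorem 2: it takes $\overline{X}$ from Theorem 1 together with a positive $p$ satisfying $p^T\overline{X}A>0$, forms the elimination matrix $E$ (identity with entries $(j,m)$ set to $-(\overline{X}A)_{jmn_m}/(\overline{X}A)_{mmn_m}$) to zero out column $(m,n_m)$ above the last row, deletes row $m$ and block $m$ to get $\overline{A}^m = X^m\hat{A}^m$, and checks that $p^TE^{-1}$ restricts to a positive certificate for the reduced matrix. You instead verify condition (3) directly: you push the vector $b$ of Theorem 2(3) through $\hat{C}^{-1}$, observe that every representative submatrix of $\hat{A}$ containing column $(m,n_m)=e_m$ is block lower triangular with $C'$ in the top-left corner, and read off that the truncated vector $\tilde{b}_N$ witnesses condition (3) for $\hat{A}^m$; the same block structure gives $\det D=\det C'$ and hence the P-property of $\hat{A}^m$, a fact the paper asserts without proof in Theorem 1. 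Your argument is more elementary and self-contained (it needs neither the complementary Z-form nor the matrix $E$), while the paper's construction of $E$ and $\overline{A}^m$ is not wasted effort: it is reused verbatim in Proposition 5 to bound the column sums of $\overline{A}^m$ below by $d$, and it produces an explicit $X^m$ in condition (2), which ties more directly to the Markov-decision-problem format used in the rest of Section 4.
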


\begin{proof}Suppose  $\overline{X}$ is as in Theorem 1.  Element $(\overline{X}A)_{mmn_m}$ is positive.  Let $E$ be the matrix obtained from the $m \times m$ identity matrix by replacing entry $(j,m)$ by $-(\overline{X}A)_{jmn_m}/(\overline{X}A)_{mmn_m}$ for $j=1,2,\ldots,m-1$.  It is easy to see that if $p^T\overline{X}A > 0$, then $p^TE^{-1}$ will be positive and satisfy
$(p^TE^{-1})(E\overline{X}A)>0$.    Form $\overline{A}^m$ from $E\overline{X}A$ by deleting row $m$ and columns
$(m,1),(m,2),\ldots,(m,n_m)$.
Matrix $\overline{A}^m$ satisfies the equivalent conditions of Theorem 2.  Furthermore, there
exists an $(m-1) \times (m-1)$ matrix $X^m$ so that $\overline{A}^m = X^m\hat{A}^m$, where $\hat{A}^m$ is from the
proof of Theorem 1.
 \end{proof}
\begin{proposition} Row $m$ of $\overline{X}$ can be found by solving a Markov decision problem with matrix $\hat{A}^m$.\end{proposition}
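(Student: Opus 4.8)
The plan is to read off the claim as an immediate consequence of the explicit construction in the proof of Theorem 1, together with Lemma 1 and Proposition 3. Recall from the proof of Theorem 1 that row $m$ of $\overline{X}$ was produced as $(v,1)^T\hat{C}^{-1}$, where $v\in\mathbb{R}^{m-1}$ is the vector guaranteed by Proposition 1 when that proposition is applied to the matrix $\hat{A}^m$ with the cost vector $c$ taken to be $-1$ times the restriction of row $m$ of $\hat{A}$ to the columns not indexed by $(m,1),\ldots,(m,n_m)$. So once $v$ is available, row $m$ of $\overline{X}$ is obtained by a single matrix--vector product; the entire computational content lies in producing $v$.

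Next I would invoke the two facts that make $v$ itself computable via a Markov decision problem. First, $\hat{A}^m$ has the P-property, as already observed in the proof of Theorem 1. Second, by Lemma 1, $\hat{A}^m$ satisfies the equivalent conditions of Theorem 2. Hence Proposition 3 applies to $\hat{A}^m$: finding a vector ``as in Proposition 1'' for $\hat{A}^m$ and a prescribed cost vector---and $v$ is precisely such a vector, for the cost vector $c$ above---is equivalent to solving a Markov decision problem. Unwinding Proposition 3, that Markov decision problem is the one associated to the augmented matrix $\widetilde{X^m\hat{A}^m}$, where $X^m$ is the $(m-1)\times(m-1)$ matrix supplied by Lemma 1; since $\hat{A}^m$ has $m-1$ rows and $n-n_m$ columns, this is a Markov decision problem of size $m\times(n-n_m+1)$, strictly smaller than the one for $A$.

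Chaining the two steps yields the statement: to obtain row $m$ of $\overline{X}$, solve the Markov decision problem associated, via Lemma 1 and Proposition 3, to $\hat{A}^m$ so as to recover $v$, and then set row $m$ of $\overline{X}$ equal to $(v,1)^T\hat{C}^{-1}$. There is no analytic difficulty here beyond what Theorem 1, Lemma 1 and Proposition 3 already provide; the only point demanding care is the bookkeeping---namely, verifying that the cost vector handed to the Markov decision problem is exactly the one dictated by the proof of Theorem 1, so that the vector returned is the $v$ that yields row $m$ of $\overline{X}$ and not some other row, and checking that the row/column augmentation in Proposition 3 leaves this correspondence intact.
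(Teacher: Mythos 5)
Your proposal is correct and follows essentially the same route as the paper: Lemma 1 to verify that $\hat{A}^m$ satisfies the conditions of Theorem 2, the LP/MDP equivalence (which you cite as Proposition 3 and the paper re-derives inline as the program minimize $c^Tx$ subject to $\hat{A}^mx=b$, $x\ge 0$) to recover $v$, and the sign-preserving argument from the proof of Theorem 1 to conclude that $(v,1)^T\hat{C}^{-1}$ serves as row $m$ of $\overline{X}$. The only difference is one of packaging: the paper spells out the linear program and notes that $b$ need not be known to run a pivoting method, whereas you invoke Proposition 3 as a black box.
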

\begin{proof}As in the proof of Theorem 1, define
$c$ to be $-1$ times the vector obtained from row $m$ of $\hat A$ by deleting entries $(m,1),(m,2),\ldots,(m,n_m)$.  Because $\hat{A}^m$ satisfies the equivalent conditions of Theorem 2, there exists a vector $b \in \mathbb{R}^{m-1}$ so that the representative submatrices of $\hat{A}^m$ are the nondegenerate feasible bases of $\hat{A}^mx=b, x \ge 0.$
The vector $v$ that was found in the proof of Theorem 1 can be found as the dual solution to the linear program
minimize $c^Tx$ subject to $\hat{A}^mx=b, x \ge 0$.  Note that one does not need to know $b$ in order to solve this linear program by a pivoting method.  If the entering variable is indexed by the pair $(j,k)$, then the leaving variable is the currently basic variable indexed by $(j,k')$ for some $k' \neq k$.
For $(v,1)$ obtained by appending a 1 to $v$ we obtain $(v,1)\hat{A}$, a vector in the row space of $A$ that is nonpositive on entries not indexed by $(m,1),(m,2),\ldots,(m,n_m)$ and has for each $j = 1,2,\ldots,m-1$ an entry $(j,k)$ that is zero.  It is also positive on entry $(m,n_m)$, so by the sign-preserving property of the row space of $A$ it is positive on all components $(m,1),(m,2),\ldots,(m,n_m)$.  Thus $(v,1)^T\hat{C}^{-1}$ can serve as row $m$ of matrix $\overline{X}$. \end{proof}

We can obtain the other rows of $\overline{X}$ similarly, by solving linear programs.

 \begin{example}   Applied to the previous example, we obtain
   $$\overline{X} = \left[ {\begin{array}{*{20}c}
 1/3 & 1/9 & 1/9 \\
 3/19 & 7/19& 5/38  \\
 4/33 & 1/11 & 10/33
      \end{array} } \right] $$ and
      $$\overline{X}A = \left[ {\begin{array}{*{20}c}
 1 & 1& | & 0 & -5/9 & | & -1/3 & 0 \\
  -9/38 & 0 & |& 45/38 & 1 & | & -3/19 & 0 \\
  0 & -7/33 & |& -2/33 & 0 & | & 29/33& 1
      \end{array} } \right]. $$  Note that the entry in position $(j,j,n_j)$ is 1 for all $j$.
\end{example}

 The second step of our method is to determine an optimum scaling of the rows of the matrix $\overline{X}$.  Let $\overline{A} = \overline{X}A$.  The column sums of $\overline{A}$ are not necessarily between 0 and 1.  We find an optimum scaling vector $x \in \mathbb{R}^m$ by solving the linear program: maximize $d$, subject to $x^T\overline{A} \ge de^n$, $x_jA_{jjk}\le 1$ for all $j = 1,2,\ldots,m$, $k \in [n_j]$.  If $diag(x)$ is the $m \times m$ diagonal matrix with $x$ on the diagonal, we see that $x^T\overline{A} = (e^m)^Tdiag(x)\overline{A}$, so the vector $x$ can be thought of as an optimal scaling of the rows of $\overline{A}$ to bring it into the MDP format.
 We call this linear program the scaling LP.  The matrix $diag(x)$ is feasible for $LP(\overline{A})$, and it has the extra feature that condition (3) of Theorem 1 is satisfied.  It is not immediately obvious, though, that the optimal value of the scaling LP is equal to the optimal value of $LP(A)$.  To prove the equality is our remaining goal.

 \begin{example} Applying the scaling LP to $\overline{A}$ from our previous example yields
  $$diag(x) = \left[ {\begin{array}{*{20}c}
 47/70 & 0 & 0 \\
 0 & 38/45&  0 \\
 0 & 0 & 33/35
      \end{array} } \right],d = 33/70 $$ and
      $$diag(x)\overline{X}A = \left[ {\begin{array}{*{20}c}
 47/70 & 47/70& | & 0 & -47/126 & | & -47/210 & 0 \\
  -1/5 & 0 & |& 1 & 38/45 & | & -2/15 & 0 \\
  0 & -1/5 & |& -2/35 & 0 & | & 29/35& 33/35
      \end{array} } \right]. $$
\end{example}

\begin{proposition} The linear program $LP(A)$ has the same optimal value as the scaling LP.
\end{proposition}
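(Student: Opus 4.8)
The plan is to prove separately that the value of the scaling LP is at most, and at least, the value of $LP(A)$; the first is routine and the second is the substantial part. For the first, given a feasible $(x,d)$ of the scaling LP with $d>0$, note that $x>0$: since $x^T\overline A\ge de^n>0$ and every representative submatrix $C$ of $\overline A$ is a K-matrix (because $A$ satisfies Theorem 2), we have $C^{-1}\ge 0$, so $x^T=(x^TC)C^{-1}>0$. Then $X:=diag(x)\overline X$ gives $XA=diag(x)\overline A$, which has diagonal entries $x_j\overline A_{jjk}\le 1$, off-diagonal entries $x_i\overline A_{ijk}\le 0$, and column sums $(e^m)^TXA=x^T\overline A\ge de^n$; hence $(X,d)$ is feasible for $LP(A)$. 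Since $LP(A)$ also has value $\ge 0$ (take $X=0$), the first inequality follows.

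For the reverse inequality, let $d^*$ be the value of $LP(A)$, assume $d^*>0$, fix an optimal $X^*$, and set $B:=X^*A$. The constraints of $LP(A)$ give $B_{ijk}\le 0$ for $i\ne j$, $B_{iik}\le 1$, and $\sum_i B_{ijk}\ge d^*$, whence $B_{iik}\ge d^*>0$; so $B$ satisfies conditions (1) and (2) of Theorem 1 and $(e^m)^TB\ge d^*e^n>0$. In particular $B$ has full row rank, $X^*$ is invertible, and $B$ has the P-property, so Theorem 1 applies to $B$ and yields a matrix $Z$ with $\overline B:=ZB$ satisfying (1)--(3) of Theorem 1. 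I will argue below that $Z$ has positive diagonal and nonnegative off-diagonal entries; granting this, rescale the rows of $\overline B$ so that $Z_{ii}=1$ for all $i$. Since $ZX^*$ then transforms $A$ into a matrix satisfying (1)--(3), uniqueness in Theorem 1 identifies $\overline B$ with the complementary Z-form $\overline A$ up to positive rescaling of its rows, and since the scaling LP is unchanged by such a rescaling it is enough to produce a vector $x$ with $x^T\overline B\ge d^*e^n$ and $x_j\overline B_{jjk}\le 1$ for all $j,k$.

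The claim about the signs of $Z$ is the step I expect to be the main obstacle. Fix $i$. Using condition (3) for $\overline B$, pick for each $p\ne i$ a column $(p,k_p)$ with $\overline B_{ipk_p}=0$, and let $(i,k_i)$ be arbitrary, so that $J'=\{(p,k_p):p\in[m]\}$ is a transversal; let $D$ and $\overline D$ be the representative submatrices of $B$ and $\overline B$ with column set $J'$, so $\overline D=ZD$. By conditions (1),(2), $D$ has positive diagonal and nonpositive off-diagonal, and $(e^m)^TD>0$ (a subvector of $(e^m)^TB>0$); hence $D^T$ is a Z-matrix with $D^Te^m>0$, so by Proposition 2, $D$ is a K-matrix and $D^{-1}\ge 0$. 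Now $Z=\overline D\,D^{-1}$, and by the choice of the $k_p$ the $i$th row of $\overline D$ is $\overline B_{iik_i}e_i^T$, so the $i$th row of $Z$ equals $\overline B_{iik_i}$ times the $i$th row of $D^{-1}$, which is nonnegative, with $i$th entry $\overline B_{iik_i}(D^{-1})_{ii}>0$ (a ratio of positive principal minors of $D$). It is exactly here that the positivity of the column sums of $B$ — forced by optimality of $(X^*,d^*)$ and $d^*>0$ — is essential, as it is what makes $D$ a K-matrix.

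Granting the sign claim, set $x:=(Z^{-1})^Te^m$ and $d:=d^*$. Then $x^T\overline B=(e^m)^TZ^{-1}ZB=(e^m)^TB\ge d^*e^n$, and since $\overline B$ also has K-matrix representative submatrices ($A$ satisfies Theorem 2), the argument of the first direction gives $x>0$. From $x^TZ=(e^m)^T$ and $Z_{ii}=1$ we get $x_j=1-\sum_{i\ne j}x_iZ_{ij}\le 1$, using $x_i>0$ and $Z_{ij}\ge 0$; and $\overline B_{jjk}=B_{jjk}+\sum_{l\ne j}Z_{jl}B_{ljk}\le B_{jjk}\le 1$, using $Z_{jl}\ge 0$ and $B_{ljk}\le 0$, while $\overline B_{jjk}>0$ by (1). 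Hence $0<x_j\overline B_{jjk}\le 1$, so $(x,d^*)$ is feasible for the scaling LP (in the $\overline B$ normalization), whose value is therefore $\ge d^*$. Together with the first inequality, the two optimal values coincide.
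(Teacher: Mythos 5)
Your proof is correct, but it takes a genuinely different route from the paper. The paper argues on the dual side: it passes from $LP(A)$ to $LP(\overline{A})$, takes an optimal solution of the dual of the scaling LP, and extends it to a feasible solution of the dual of $LP(\overline{A})$ with the same objective value by solving, for each row index $i$, an $(m-1)\times(m-1)$ linear system whose coefficient matrix is a K-matrix built from the zero entries guaranteed by condition (3) of Theorem 1 (nonnegative right-hand side, hence nonnegative solution); weak duality then closes the gap, while the easy inclusion of diagonal solutions gives the other inequality, just as in your first paragraph. You instead work entirely on the primal side: from an optimal $(X^*,d^*)$ you form $B=X^*A$, apply Theorem 1 to $B$, and prove the key structural fact that the transforming matrix $Z$ (with $\overline{B}=ZB$) is nonnegative with positive diagonal, by choosing a representative submatrix $D$ of $B$ through the zeros of row $i$ of $\overline{B}$ so that $D$ is a K-matrix and row $i$ of $Z$ is a nonnegative multiple of row $i$ of $D^{-1}$; after normalizing $Z_{ii}=1$ and invoking the uniqueness in Theorem 1 to identify $\overline{B}$ with a positive row-rescaling of $\overline{A}$, the vector $x^T=(e^m)^TZ^{-1}$ is a feasible scaling-LP solution of value $d^*$. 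Your K-matrix selection argument plays exactly the role that the K-matrix systems play in the paper's dual extension, so the two proofs have comparable depth; what your version buys is an explicit primal transformation showing how any optimal $X^*$ of $LP(A)$ gives rise to an optimal diagonal scaling of $\overline{X}$ (thus directly answering the question raised after Example 1 about optima satisfying condition (3) of Theorem 1), whereas the paper's dual construction is closer in spirit to the certificates used later in Proposition 7. Two small points you should make explicit: the invertibility of $Z$ (immediate, e.g., since $\overline{D}=ZD$ with $\overline{D}$ a representative submatrix of a positive row-rescaling of $\overline{A}$, hence a K-matrix), and the fact that the optimum of $LP(A)$ is attained (it is feasible with $X=0$, $d=0$, and every feasible $d$ is at most $1$ because each column sum is at most the corresponding diagonal entry).
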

\begin{proof}
 It is clear that the linear programs $LP(A)$ and $LP(\overline{A})$ have the same optimal value, so we want to show that the optimal values of $LP(\overline{A})$ and the scaling LP are the same.  Note that the optimal values of both
 LPs are positive.  For the scaling LP, this implies, by (3) of Proposition 1, that the solution $x$ will be nonnegative.   We will compare the dual linear programs.  The dual to $LP(\overline{A})$ has variables $y_{ijk}$ corresponding to entries of $\overline{A}$ and variables $w_{jk}$ corresponding to columns of $\overline{A}$.

$$  {\begin{array}{*{20}c}
 \text{minimize } \sum_{i=j\in [m],k \in [n_j]} y_{ijk} \text{ subject to}  \\
  \sum_{j\in [m],k \in [n_j]} \overline{A}_{i'jk}y_{ijk} = \sum_{j\in[m],k\in [n_j]}\overline{A}_{i'jk}w_{jk} \text{ for all } i\in [m],i'\in [m] \\
  \sum_{j\in [m],k\in [n_j]} w_{jk} = 1,\\
   y \ge 0, w \ge 0
      \end{array} }.  $$

The scaling LP has variables $y_{jjk}$ corresponding to entries of $\overline{A}$ for which the first two indices are equal, and variables $w_{jk}$ corresponding to columns of $\overline{A}$.

$$  {\begin{array}{*{20}c}
 \text{minimize } \sum_{i=j\in [m],k \in [n_j]} y_{ijk} \text{ subject to}  \\
  \sum_{k \in [n_i]} \overline{A}_{iik}y_{iik} = \sum_{j\in[m],k\in [n_j]}\overline{A}_{ijk}w_{jk} \text{ for all } i\in [m] \\
  \sum_{j\in [m],k\in [n_j]} w_{jk} = 1,\\
   y \ge 0, w \ge 0
      \end{array} }.  $$

Suppose that $\{y^*_{ijk}:i=j\}$ and $\{w^*_{jk}\}$ are an optimal solution to the dual of the scaling LP.  We will
create a solution $(\overline{y},\overline{w})$ to the dual of $LP(\overline{A})$ by solving systems of equations.
We will keep the solution to the scaling LP:  $\overline{y}_{ijk}=y^*_{ijk}$ whenever $i=j$ and $\overline{w}=w^*$.

For each $i = 1,2,\ldots,m$, we will form a linear system to solve for variables $\overline{y}_{ijk}$ for $j \neq i$.  Given $i$, let $K_i$ be a function that assigns to each $j \neq i$ a pair $(j,k)$ for which $\overline{A}_{ijk}=0$.
System $i$ will have an equation for each $i' \neq i$:
$$\sum_{j \neq i}\overline{A}_{i'jK_{i}(j)}\overline{y}_{ijK_{i}(j)}=\sum_{j\in [m],k\in[n_j]}\overline{A}_{i'jk}w^*_{jk} - \sum_{k \in [n_i]}\overline{A}_{i'ik}y^*_{iik},$$

It is easy to verify that the right hand side of this system is nonnegative. The coefficient matrix of the system is an
$(m-1) \times (m-1)$ $K$-matrix, so the system will have a nonnegative solution when the right hand side is nonnegative. The systems for different $i$ have disjoint sets of variables, so they are solved independently.  All variables $\overline{y}_{ijk}$ not specified by the scaling LP or by the systems are set to 0.  The resulting $(\overline{y},\overline{w})$ is a feasible solution to the dual LP of $LP(\overline{A})$.  The objective value of this dual solution is the same as that of the scaling LP, because the variables in the two objective functions are the same.
\end{proof}

\section{Complexity}
Now we assume that $(\overline{X},d)$ solves $LP(A)$ and that $d>0$.  We will assume that the matrix $\overline{X}$ satisfies conditions $(1)-(3)$ of Theorem 1.  We have $(e^m)^T\overline{X}A \ge d(e^n)^T$.
\begin{proposition} Row $m$ of $\overline{X}$ is found by a strongly polynomial algorithm, solving a Markov decision problem with matrix $\hat{A}^m$, if $d$ is considered to be a constant.\end{proposition}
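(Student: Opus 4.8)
The plan is to obtain the complexity directly from Proposition 4 and the bound of \cite{Ye}. Proposition 4 produces row $m$ of $\overline{X}$ by a pivoting method on the linear program $\min c^Tx$ subject to $\hat{A}^m x = b$, $x\ge 0$, and since $\hat{A}^m$ satisfies the conditions of Theorem 2 (Lemma 1), Proposition 3 lets us realize this as Dantzig's-rule simplex applied to a genuine discounted Markov decision problem. By \cite{Ye} that takes at most $\frac{p(q-p)}{1-\gamma}\log\frac{p^2}{1-\gamma}$ pivots, where $p\le m$ and $q\le n+1$ are the dimensions of the Markov decision problem and $\gamma$ is its discount factor, and each pivot costs a polynomial amount of work. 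So the statement reduces to bounding $\gamma$ away from $1$ by a quantity depending only on $d$; with $d$ treated as a constant this makes the pivot count polynomial in $m$ and $n$, hence gives a strongly polynomial algorithm.

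To control $\gamma$ I would make the Proposition 3 construction explicit. Using the matrix $E$ from the proof of Lemma 1, take $X^m$ with $X^m\hat{A}^m=\overline{A}^m$, where $\overline{A}^m$ is obtained from $E\overline{X}A$ by deleting row $m$ and the columns indexed $(m,1),\dots,(m,n_m)$. I will establish three facts about $\overline{A}^m$: (a) every column sums to at least $d$; (b) every diagonal entry is at most $1$; (c) every diagonal entry is positive. Fact (c), with the immediate sign check that the off-diagonal entries of $\overline{A}^m$ are nonpositive, shows $\overline{A}^m$ satisfies conditions (1)--(2) of Theorem 1, so it is itself a legitimate input for the construction in the proof of Proposition 3; fact (a) shows $(e^{m-1})^T\overline{A}^m>0$, so no rescaling of the rows of $X^m$ is needed; and (a),(b) force the common column sum $s:=\min_{(j,k)}\sum_i(\overline{A}^m)_{ijk}$ produced by the augmentation to lie in $[d,1]$. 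Since the augmented matrix then has all column sums equal to $s$, all diagonal entries between $s$ and $1$, and its single new column equal to $s\,e_m$, each of its representative submatrices is of the form $I-\gamma P$ with $\gamma=1-s\le 1-d$.

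The heart of the argument is fact (a). Writing $\sigma_{jk}=\sum_{i=1}^m(\overline{X}A)_{ijk}$ and using $E_{im}=-(\overline{X}A)_{imn_m}/(\overline{X}A)_{mmn_m}$ for $i<m$, a direct computation gives
$$\sum_{i=1}^{m-1}(\overline{A}^m)_{ijk}=\bigl(\sigma_{jk}-(\overline{X}A)_{mjk}\bigr)+\frac{-(\overline{X}A)_{mjk}}{(\overline{X}A)_{mmn_m}}\bigl(\sigma_{mn_m}-(\overline{X}A)_{mmn_m}\bigr).$$
Now $\sigma_{jk}\ge d$ and $\sigma_{mn_m}\ge d$ by the constraint $(e^m)^T\overline{X}A\ge d(e^n)^T$ of $LP(A)$; $(\overline{X}A)_{mjk}\le 0$ because $j\ne m$ and $\overline{X}$ satisfies condition (2) of Theorem 1; and $(\overline{X}A)_{mmn_m}>0$ by condition (1). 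Setting $\beta=-(\overline{X}A)_{mjk}\ge 0$ and $c=(\overline{X}A)_{mmn_m}>0$, the right-hand side equals $\sigma_{jk}+\beta\bigl(1+(\sigma_{mn_m}-c)/c\bigr)=\sigma_{jk}+\beta\,\sigma_{mn_m}/c\ge\sigma_{jk}\ge d$. Fact (b) is immediate: $(\overline{A}^m)_{jjk}=(\overline{X}A)_{jjk}+E_{jm}(\overline{X}A)_{mjk}$, with $E_{jm}\ge 0$ (since $(\overline{X}A)_{jmn_m}\le 0$) and $(\overline{X}A)_{mjk}\le 0$, so $(\overline{A}^m)_{jjk}\le(\overline{X}A)_{jjk}\le 1$. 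For fact (c), note that since $d>0$ the representative submatrices of $\overline{A}=\overline{X}A$ are $K$-matrices (as in the proof of Theorem 2), so the $2\times 2$ principal minor on positions $j,m$ of the representative submatrix of $\overline{A}$ using columns $(j,k)$ and $(m,n_m)$ is positive; that minor is $(\overline{X}A)_{jjk}(\overline{X}A)_{mmn_m}-(\overline{X}A)_{jmn_m}(\overline{X}A)_{mjk}$, and dividing by $(\overline{X}A)_{mmn_m}>0$ rearranges it to $(\overline{A}^m)_{jjk}>0$.

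Substituting $1-\gamma\ge d$ into the bound of \cite{Ye} gives a pivot count of $O\!\left(\frac{mn}{d}\log\frac{m^2}{d}\right)$, which is polynomial in $m$ and $n$ once $d$ is held constant; together with the polynomial work per pivot this is the claimed strongly polynomial algorithm. I expect the main obstacle to be the bookkeeping needed to confirm that the Proposition 3 construction applied to $\hat{A}^m$ really produces the augmentation of $\overline{A}^m$ itself rather than of some row-rescaling of it, and that the single augmenting column $s\,e_m$ does not spoil the Markov-decision-problem form; once the role of $E$ is untangled, the displayed identity and facts (b) and (c) are routine.
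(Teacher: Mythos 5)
Your proposal is correct and follows essentially the same route as the paper's proof: form $E$ and $\overline{A}^m$, show via the same column-sum computation that $(e^{m-1})^T\overline{A}^m_{\cdot jk}\ge d$ for every column, and then invoke the bound of \cite{Ye} with $1-\gamma\ge d$. Your facts (b) and (c) and the explicit check that the Proposition 3 augmentation yields a genuine Markov decision problem with discount factor at most $1-d$ simply spell out details the paper leaves implicit, and your form of the key identity is if anything cleaner than the paper's displayed version.
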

\begin{proof}Let $E$ be the matrix obtained from the $m \times m$ identity matrix by replacing entry $(j,m)$ by $-(\overline{X}A)_{jmn_m}/(\overline{X}A)_{mmn_m}$ for $j=1,2,\ldots,m-1$.  Form $\overline{A}^m$ from $E\overline{X}A$ by deleting row $m$ and columns $(m,1),(m,2),\ldots,(m,n_m)$. Then for any $(j,k)$ with $j \in [m-1]$ and $k \in [n_j]$, we have $d \le (e^m)^T\overline{A}_{\cdot jk} = (e^{m-1})^T\overline{A}^m _{\cdot jk} + (1-\sum_{j=1}^{m-1}(\overline{X}A)_ {jmn_m}/(\overline{X}A)_{mmn_m})(\overline{X}A)_{mjk}$.  This last term is nonpositive, so $(e^{m-1})^T\overline{A}^m _{\cdot jk}\ge d$.  It follows from \cite{Ye} that the linear program to compute row $m$ of $\overline{X}$ (without scaling) can be solved in at most $\frac{m(n-m)}{d}\cdot log(\frac{m^2}{d})$ pivots using Dantzig's rule, or with similar complexity using the results of $\cite{HMZ}$.
\end{proof}

\begin{proposition} The scaling LP can be solved by applying the strongly polynomial algorithm of \cite{HMZ}, again assuming $d$ is a constant, followed by solving a one-variable linear program.
\end{proposition}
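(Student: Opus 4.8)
The plan is to exhibit the scaling LP, up to transformations that do not change the number of iterations of a strategy-iteration (or simplex-type) algorithm, as a discounted Markov decision problem of the kind treated in \cite{HMZ} with discount factor $1-d$, where $d$ is its optimal value. Ye's and \cite{HMZ}'s bounds then apply with $1-\gamma=d$, giving a strongly polynomial algorithm once $d$ is treated as a constant, and a single one-variable linear program fixes the remaining scalar degree of freedom and reports $d$.

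First I would record two simple reductions. By condition (1) of Theorem 1 we have $\overline{A}_{jjk}>0$, so the $n$ diagonal inequalities $x_j\overline{A}_{jjk}\le 1$ collapse to the $m$ box constraints $x_j\le\sigma_j$ with $\sigma_j:=1/\max_k\overline{A}_{jjk}$; and, as observed in the proof of Proposition 5, an optimal $x$ is nonnegative, so the scaling LP is: maximize $\min_{(j,k)}(x^T\overline{A})_{(j,k)}$ over $x$ in the box $[0,\sigma]$. Next comes the sign observation. For the optimal (hence nonnegative) $(x,d)$, condition (2) of Theorem 1 gives $\overline{A}_{ijk}\le 0$ for $i\ne j$, hence $x_j\overline{A}_{jjk}\ge(x^T\overline{A})_{(j,k)}\ge d$ for every $(j,k)$. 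Performing on $\mathrm{diag}(x)\overline{A}$ the augmentation from the proof of Proposition 3 --- adjoin a zero column indexed by $(m+1,1)$ and a row whose $(j,k)$ entry is $d-(x^T\overline{A})_{(j,k)}\le 0$ for $j\le m$ and equals $d$ in column $(m+1,1)$ --- then produces an $(m+1)\times(n+1)$ matrix all of whose column sums are $d$, whose off-diagonal entries are $\le 0$, and (using the sign observation and $d\le 1$) whose diagonal entries lie in $[d,1]$; a direct check shows it is a genuine Markov decision problem matrix with $\gamma=1-d$.

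The complexity argument then parallels Proposition 6. As in the proof of Proposition 3 the appended row and column do not alter the steps of any pivoting or strategy-iteration algorithm; passing back from $\mathrm{diag}(x)\overline{A}$ to the available matrix $\overline{A}$ is a rescaling of rows, and the box constraints $x_j\le\sigma_j$ together with the objective $\max d$ can be represented by adjoining to $\overline{A}$ further columns (in the spirit of the extra columns $e_j$ used in the proof of Theorem 2), again without affecting the iteration count. Hence the strategy-iteration algorithm of \cite{HMZ} solves the scaling LP in $O\!\left(\frac{n}{d}\log\frac{m}{d}\right)$ iterations, which is strongly polynomial when $d$ is fixed. This returns the optimal policy, hence the optimal scaling direction $\hat x$; since the problem is invariant under scaling of $x$ apart from the box face it must meet, one multiplier remains free, and it is pinned down by the one-variable linear program ``maximize $\lambda$ subject to $\lambda\hat x_j\le\sigma_j$ for $j=1,\dots,m$'', after which $d=\lambda\min_{(j,k)}(\hat x^T\overline{A})_{(j,k)}$.

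I expect the main obstacle to be making the reduction of the second paragraph rigorous without circularity: the augmentation that equalizes the column sums uses the as-yet-unknown optimal $x$ and $d$, exactly as the construction in the proof of Proposition 6 uses the unknown optimal $\overline{X}$, so one must argue --- via the invariance of the iteration count under row rescaling and under the Proposition 3 augmentation --- that the iteration count of the algorithm run on the available data $\overline{A}$ is bounded by the \cite{HMZ} bound evaluated at the discount factor $1-d$ of the hypothetical augmented problem. The secondary delicate point is the precise encoding of the box constraints $x_j\le\sigma_j$ and of the objective $\max d$ in Markov-decision-problem format --- in particular whether the $M_1/M_2$ generalization of \cite{HMZ} is actually needed rather than the plain discounted problem --- and it is exactly this that makes it natural to peel the one free scalar off into a separate one-variable linear program rather than folding the whole scaling LP into a single Markov decision problem.
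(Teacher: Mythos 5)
Your route diverges from the paper's, and it leaves the decisive steps unproved. The paper does not try to encode the whole scaling LP (box constraints and the objective $\max d$ included) as a single Markov decision problem. Instead it has \cite{HMZ} solve only the complementarity system: find $v$ with $v^T\overline{A}\ge (e^n)^T$ such that for each $j$ some component $(j,k)$ of $(e^n)^T-v^T\overline{A}$ is zero --- exactly the kind of optimality-condition problem \cite{HMZ} treats, so no circular augmentation with the unknown optimal $x,d$ is needed and no extra columns for the constraints $x_j\overline{A}_{jjk}\le 1$ have to be invented. The one-variable LP is then ``largest $d$ with $dv_i\overline{A}_{iik}\le 1$,'' which matches your $\lambda$-step. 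The crucial ingredient you are missing is the proof that $x=dv$ is actually \emph{optimal} for the scaling LP. Your argument for this is the remark that ``the problem is invariant under scaling of $x$ apart from the box face it must meet, so one multiplier remains free''; that only pins down the magnitude along the direction $\hat x$ you happen to have computed, and gives no reason why that direction maximizes $d$ among all feasible $x$ --- which is precisely the content of the proposition. The paper closes this gap by an explicit duality argument: taking a tight entry $(i,i,k)$ of $d\,\mathrm{diag}(v)\overline{A}$ and a representative submatrix $C$ with $(e^m)^TC=d(e^m)^T$, it builds a dual feasible pair $(y,w)$ for the scaling LP (with $y_{iik}=d$ and $Cw=de_i$) whose objective value equals that of the primal solution $dv$, so optimality follows from weak duality. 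Nothing in your proposal plays this certifying role.

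The other weaknesses you partly acknowledge yourself, but acknowledging them does not discharge them: the reduction of the second paragraph uses the unknown optimal $(x,d)$ to equalize column sums, and your proposed fix --- ``invariance of the iteration count under row rescaling and under the Proposition 3 augmentation'' --- is not established for the problem you are actually running the algorithm on, since unlike Proposition 6 you never exhibit the concrete problem (matrix, right-hand side, cost vector) handed to \cite{HMZ} together with a lower bound on its effective $1-\gamma$ in terms of $d$. Likewise the claim that the box constraints $x_j\le\sigma_j$ and the objective $\max d$ ``can be represented by adjoining further columns without affecting the iteration count'' is asserted, not proved, and it is doubtful as stated: an LP with a free direction-plus-scale structure is not obviously an instance of the \cite{HMZ} framework. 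The paper's decomposition --- complementarity problem first, scalar scaling second, optimality by an explicit dual solution --- is exactly what makes these difficulties disappear, and it is the part of the argument your proposal would need to supply.
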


\begin{proof}Here we first find a vector $v$ satisfying $v^T\overline{A} \ge (e^n)^T$, such that
for every $j \in [m]$ there exists a $k \in [n_j]$ such that component $(j,k)$ of  $(e^n)^T-v^T\overline{A}$ is zero.
This vector can be found by the strongly polynomial algorithm of \cite{HMZ}.  Then we find the largest $d$ so that
$dv_i\overline{A}_{iik} \le 1$ for all $i \in [m]$, $k \in [n_i]$.  Let $(i,i,k)$ index an entry of $d(diag(v))\overline{A}$ that is equal to 1.  Let $C$ be a representative submatrix of $d(diag(v))\overline{A}$ satisfying $(e^m)^TC = d(e^m)^T$.  We build a dual solution $(y,w)$ to the scaling LP as follows.  We let $y_{iik} = d$,  and let the components of $w$ corresponding to the columns of $C$ satisfy $Cw=de_i$.  Let all other components of $y$ and $w$ be 0.  Then the system $Cw=de_i$ guarantees that the constraints $$\sum_{k \in [n_i]} \overline{A}_{iik}y_{iik} = \sum_{j\in[m],k\in [n_j]}\overline{A}_{ijk}w_{jk} \text{ for all } i\in [m] $$ will be satisfied.  Premultiplying both sides of $Cw=de_i$ by $(e^m)^T$ gives $(e^m)^TCw = d(e^m)^Te_i$, or $(e^m)^Tw = 1$.  Thus $(y,w)$ is a feasible solution to the dual of the scaling LP, which has the same value as the solution $dv$ of the primal LP.
\end{proof}

\begin{theorem} If the optimal value of the linear program $LP(A)$ is positive, and this optimal value is considered a constant that is not part of the input data, then there is a strongly polynomial algorithm to solve $LP(A)$.
\end{theorem}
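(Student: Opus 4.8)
The plan is to assemble the final theorem directly from the two-step method developed in Sections 4 and 5, using the propositions already proved. The strategy is: first reduce the computation of the full matrix $\overline{X}$ (the complementary Z-form) to $m$ separate Markov decision problems, one per row; then invoke the complexity bound of \cite{Ye} (or \cite{HMZ}) to argue each of those is solved in strongly polynomial time when $d$ is constant; and finally handle the scaling step. Proposition 8 gives the complexity for row $m$ of $\overline{X}$, and by the symmetry remark following Proposition 6 (``We can obtain the other rows of $\overline{X}$ similarly''), the same argument applies to every row $i$: permute the roles of the indices so that row $i$ plays the role of row $m$, form the analogous matrix $\hat{A}^i$, and apply the bound $\frac{m(n-m)}{d}\cdot\log(\frac{m^2}{d})$ pivots. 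The key point to make explicit is that the lower bound $d$ on the entries survives the passage to each $\hat{A}^i$ — this is exactly what the inequality chain in the proof of Proposition 8 establishes, and it must be noted that it holds row-by-row, not just for $i=m$.

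Second, I would address the scaling LP. Proposition 9 shows that once $\overline{A}=\overline{X}A$ is in hand, the optimal scaling is obtained by one more application of the \cite{HMZ} algorithm to find $v$ with $v^T\overline{A}\ge (e^n)^T$ satisfying the complementarity condition, followed by a trivial one-variable linear program to find the largest admissible $d$. Since $\overline{A}$ also satisfies the equivalent conditions of Theorem 2 (each representative submatrix of $\overline{A}$ is a K-matrix, as shown in the proof of Theorem 2), the hypotheses of the \cite{HMZ} algorithm are met, and the relevant discount-type parameter is again controlled by the constant $d$. Proposition 7 guarantees that the optimal value of the scaling LP equals that of $LP(A)$, so the $(X,d)$ we produce — namely $X = \mathrm{diag}(x)\overline{X}$ with $x$ the scaling vector — is genuinely optimal for $LP(A)$, and moreover satisfies all three conditions of Theorem 1.

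Third, I would combine the pieces: the total work is $m$ Markov-decision-problem solves for the rows of $\overline{X}$, plus one more for the scaling, plus some linear algebra (forming each $\hat{A}^i$ and $\hat{C}^{-1}$, the elimination matrices $E$, and the final one-variable LP). Each MDP solve is strongly polynomial in $m$ and $n$ by \cite{Ye}/\cite{HMZ} because $d$ is treated as a fixed constant; the number of such solves is $O(m)$; and the interspersed linear algebra is clearly strongly polynomial. Hence the whole algorithm is strongly polynomial. The main obstacle — really the only nontrivial point — is verifying that the constant lower bound $d$ on the column sums is preserved uniformly as one peels off each row to form $\hat{A}^i$, so that every one of the $m+1$ Markov decision problems has its complexity parameter bounded away from the degenerate regime by the same constant; this is handled by repeating the estimate in Proposition 8 for a general index $i$ in place of $m$. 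A secondary bookkeeping point is confirming that the arithmetic precision (bit-size of intermediate numbers) stays polynomially bounded, which follows since all operations are a bounded number of matrix inversions and LP solves over the rationals.

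\begin{proof}
By Proposition 6 and the remark following it, each row $i$ of the complementary Z-form $\overline{X}$ can be computed by solving a Markov decision problem whose matrix is the analogue $\hat{A}^i$ of the matrix $\hat{A}^m$ from the proof of Theorem 1, obtained by using row $i$ in place of row $m$. By Lemma 1 (applied with the roles of the indices permuted), each such $\hat{A}^i$ satisfies the equivalent conditions of Theorem 2, so these Markov decision problems are well posed. By the argument in the proof of Proposition 8, carried out with a general index $i$ in place of $m$: letting $E$ be the corresponding elimination matrix and $\overline{A}^i$ the matrix formed from $E\overline{X}A$ by deleting row $i$ and the columns indexed $(i,1),\ldots,(i,n_i)$, for any $(j,k)$ with $j\neq i$ we have
$$ d \le (e^m)^T\overline{A}_{\cdot jk} = (e^{m-1})^T\overline{A}^i_{\cdot jk} + \Big(1-\sum_{j\neq i}(\overline{X}A)_{jin_i}/(\overline{X}A)_{iin_i}\Big)(\overline{X}A)_{ijk}, $$
and the last term is nonpositive, so $(e^{m-1})^T\overline{A}^i_{\cdot jk}\ge d$. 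Hence each of these $m$ Markov decision problems has its discount parameter bounded so that, by \cite{Ye}, Dantzig's rule terminates in at most $\frac{m(n-m)}{d}\cdot\log(\frac{m^2}{d})$ pivots, or with comparable complexity by \cite{HMZ}; since $d$ is a fixed positive constant, each is solved in strongly polynomial time. Assembling the rows gives $\overline{X}$.

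Now apply Proposition 9 to $\overline{A}=\overline{X}A$: since every representative submatrix of $\overline{A}$ is a K-matrix, $\overline{A}$ satisfies the conditions of Theorem 2, and the scaling LP is solved by one application of the strongly polynomial algorithm of \cite{HMZ} to find $v$ with $v^T\overline{A}\ge(e^n)^T$ satisfying the complementarity condition, followed by a one-variable linear program to determine the largest $d$ with $dv_i\overline{A}_{iik}\le 1$ for all $i\in[m]$, $k\in[n_i]$. By Proposition 7 the optimal value of the scaling LP equals the optimal value of $LP(A)$, so $X=\mathrm{diag}(x)\overline{X}$ together with this $d$ is an optimal solution of $LP(A)$; by construction $X$ also satisfies conditions $(1)$--$(3)$ of Theorem 1.

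The overall algorithm thus consists of $O(m)$ Markov decision problem solves, each strongly polynomial by \cite{Ye}/\cite{HMZ} under the assumption that the optimal value $d$ of $LP(A)$ is a fixed constant, interleaved with a bounded number of matrix inversions (to form $\hat{C}^{-1}$ and the elimination matrices $E$) and a final one-variable linear program, all of which are strongly polynomial. Therefore $LP(A)$ is solved by a strongly polynomial algorithm.
\end{proof}
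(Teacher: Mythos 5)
Your proposal is correct and follows essentially the same route the paper intends: Theorem 3 is left as an immediate consequence of the preceding results, and you assemble exactly those pieces — the row-by-row computation of $\overline{X}$ via Markov decision problems with the column-sum lower bound $d$ preserved in each $\hat{A}^i$ (the estimate from the paper's strong-polynomiality proposition, correctly extended from $i=m$ to general $i$), the scaling step via \cite{HMZ} plus a one-variable LP, and the equality of optimal values of the scaling LP and $LP(A)$. The only discrepancies are cosmetic (your proposition numbers are shifted relative to the paper's), not mathematical.
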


It is interesting that the two step method described here serves two seemingly independent purposes, to satisfy condition (3) of Theorem 1 and to be efficient.  There does not appear to be reason to assume that the straightforward application of, say, the simplex method to $LP(A)$ would be strongly polynomial.  The solution $diag(x)\overline{X}$ found to $LP(A)$ also has the property that for each $j \in [m]$ there is an index $(j,k)$ for which $x^T\overline{X}A_{jk} = d$, where $d$ is the optimal value of the LP.

\end{document}